\theoremstyle{plain}
    \newtheorem{theorem}{Theorem}[section]
    \newtheorem{proposition}[theorem]{Proposition}
    \newtheorem{lemma}[theorem]{Lemma}
    \newtheorem{corollary}[theorem]{Corollary}
      \newtheorem{question}[theorem]{Question}
\theoremstyle{definition}
    \newtheorem{definition}[theorem]{Definition}
    \newtheorem{example}[theorem]{Example}
    \newtheorem{remark}[theorem]{Remark}
\def\Alphabet{A,B,C,D,E,F,G,H,I,J,K,L,M,N,O,P,Q,R,S,T,U,V,W,X,Y,Z}%  Capitalized Alphabet
\def\alphabet{a,b,c,d,e,f,g,h,i,j,k,l,m,n,o,p,q,r,s,t,u,v,w,x,y,z}%	lowercase alphabet
\def\endpiece{xxx}%									marks end of list
\def\makeAlphabet[#1]{\expandafter\makeA#1,xxx,}%		Ex. \makeAlphabet[A,B]
\def\makealphabet[#1]{\expandafter\makea#1,xxx,}%		Ex. \makealphabet[c,d]
\def\makeA#1,{\def\temp{#1}\ifx\temp\endpiece\else%
\mkbb{#1}\mkfrak{#1}\mkbf{#1}\mkcal{#1}\mkscr{#1}\mkbs{#1}\expandafter\makeA\fi}%
\def\makea#1,{\def\temp{#1}\ifx\temp\endpiece\else\mkfrak{#1}\mkbf{#1}\mkbs{#1}\expandafter\makea\fi}%
\def\mkbb#1{\expandafter\def\csname bb#1\endcsname{\mathbb{#1}}}%      Define bb
\def\mkfrak#1{\expandafter\def\csname fr#1\endcsname{\mathfrak{#1}}}%    Define frak
\def\mkbf#1{\expandafter\def\csname b#1\endcsname{\mathbf{#1}}}%           Define bold letters
\def\mkcal#1{\expandafter\def\csname c#1\endcsname{\mathcal{#1}}}%       Define calligraphy
\def\mkscr#1{\expandafter\def\csname s#1\endcsname{\mathscr{#1}}}%       Define script
\def\mkbs#1{\expandafter\def\csname bs#1\endcsname{{\boldsymbol{#1}}}}%       Define bold symbol
\def\makeop[#1]{\xmakeop#1,xxx,}%					Ex. \makeop[Hom,Spec]
\def\mkop#1{\expandafter\def\csname #1\endcsname{{\mathrm{#1}}}} % 
\def\xmakeop#1,{\def\temp{#1}\ifx\temp\endpiece\else\mkop{#1}\expandafter\xmakeop\fi}%
\def\makeup[#1]{\xmakeup#1,xxx,}%					Ex. \makeup[Hom,Spec]
\def\mkup#1{\expandafter\def\csname #1\endcsname{{\mathrm{#1}\,}}} % 
\def\xmakeup#1,{\def\temp{#1}\ifx\temp\endpiece\else\mkup{#1}\expandafter\xmakeup\fi}%
\begin{document}
%--- the title ------------------------------------------------------------------------------------------------------------------------
\title{The Analogue of Aldous' spectral gap conjecture for the generalized exclusion process}
%\author[Kanegae]{Kazuna Kanegae}
%\author[Wachi]{Hidetada Wachi}
\author[Kanegae]{Kazuna Kanegae}
\author[Wachi]{Hidetada Wachi}

\address[Kanegae]{Department of Mathematics, Faculty of Science and Technology, Keio University, 3-14-1 Hiyoshi, Kouhoku-ku, Yokohama 223-8522, Japan}
\email[Kanegae]{kanegae\_k@keio.jp}

\address[Wachi]{Department of Mathematics, Faculty of Science and Technology, Keio University, 3-14-1 Hiyoshi, Kouhoku-ku, Yokohama 223-8522, Japan}
\email[Wachi]{wachi213@keio.jp}

\subjclass[2010]{Primary 60K35; Secondary 60J27, 05C50, 20B30.}
\keywords{weighted graph, spectral gap, symmetric group, generalized exclusion process, }

\thanks{This work was supported in JST CREST Grant Number JPMJCR1913 including AIP challenge program and RIKEN Junior Research Associate Program. }

\date{\today \quad ver. 2.1.0.4}
%\thanks{The first author is supported in JST CREST Grant Number JPMJCR1913 including AIP challenge program, Japan.}

\begin{abstract}
     Caputo, Ligget, and Richthammer proved Aldous' spectral gap conjecture, which asserts that the spectral gaps of a random walk and an interchange process on the common weighted graph are equal. 
     In this paper, we will prove an analogue of Aldous' spectral gap conjecture for generalized exclusion processes, which explicitly describes the spectral gap of a generalized exclusion process by the spectral gap of a random walk.
\end{abstract}

\maketitle

\tableofcontents

%%%%%%%%%%%%%%%%%%%%%%%%%%%%%%%%%%%%%%%%%%%%%%%%%%%
%
%
%
%
\section{Introduction}\label{section-introduction}
%
%
%
%
%%%%%%%%%%%%%%%%%%%%%%%%%%%%%%%%%%%%%%%%%%%%%%%%%%%
%%%%%%%%%%%%%%%%%%%%%%%%%%%%%%%%%%%%%%%%%%%%%%%%%%%
%
\subsection{Background}\label{subsection-introduction-spectral-gap}
%
%%%%%%%%%%%%%%%%%%%%%%%%%%%%%%%%%%%%%%%%%%%%%%%%%%%

Spectral gaps play an important role in studying the relaxation time of continuous-time Markov processes. 
Consider a finite state space $S$ and a continuous-time Markov process $\{X_t\}_{t\geq 0}$ on $S$ with transition rates $q_{x,y}$ for $x,y \in S$.
The generator $\mathcal{L}$ of this process is given by 
\begin{equation*}
\mathcal{L}f(x)=\sum_{y\in S}q_{x,y}(f(y)-f(x)).     
\end{equation*}
The stationary distribution of this process is the function $\psi:S\rightarrow \mathbb{R}_{\geq 0}$ such that $\mathcal{L}\psi=\psi$ and $\sum_{x\in S}\psi(x)=1$.
It is well known that when a continuous-time Markov process is reversible (i.e. $q_{x,y}\psi(x)=q_{y,x}\psi(y)$), all eigenvalues of $-\mathcal{L}$ are real and non-negative (c.f. \Cref{lemma:non-negative}).
If such a continuous-time Markov process is irreducible, that is every state can be reached from every other state (c.f \cite{MC}*{\S 1.2}), the eigenvalue $-\lambda_i$ for $i=0,\dots, |S|-1$ of $-\mathcal{L}$ satisfies 
\begin{equation*}
0=\lambda_0<\lambda_1\leq \lambda_2\leq\cdots\leq \lambda_{|S|-1}.   
\end{equation*}
The eigenvalue $\lambda_1$ is called the spectral gap of this process.
The conditional probability that this process is initially in state $x$ and will be in state $y$ at time $t$ is given by
\begin{equation*}
P(X_t=y\mid X_0=x)=\psi(x)+a_{x,y}e^{-\lambda_1t}+o(e^{-\lambda_1t}),\quad t\rightarrow \infty   
\end{equation*}
for some $a_{x,y}\neq 0$. 
The value $1/\lambda_1$ is often referred to as the relaxation time of this process.
Therefore, to study the time evolution of a Markov process, we need to evaluate $\lambda_1$. 
However, the evaluation of spectral gaps is generally difficult, and no effective evaluation method is known.
Therefore, the following issues are considered.

\begin{question}\label{main-question}
In what case is the spectral gap of the process greater than or especially equal to the spectral gaps of other processes for which a spectral gap can be easily found?
\end{question}

In this paper, we evaluate the spectral gap of the following process.
A complete symmetric weighted directed graph of size $n$ is a pair $X=(V,r)$ consisting of a set of vertices $V$ such that $|V|=n$ and a weight function on edges $r:V\times V\rightarrow \mathbb{R}_{\geq 0}$ such that $r(u,w)=r(w,u)$ and $r(u,u)=0$ for any $u,w\in V$.
Fix $k\in\mathbb{Z}_{\geq 0}$, which we call the \textit{maximal occupancy}, and fix $l\in \mathbb{Z}_{\geq 1}$, which we call the \textit{number of particles}.
We define $S_{\mathrm{GEP}}=S_{\mathrm{GEP}}(X,l,k)$ to be the set of assignments of $l$ indistinguishable particles to $n$ vertices, such that there are at most $k$ particles at each vertex. 
In other words, 
\begin{equation*}
\begin{split}
S_{\mathrm{GEP}}:=\left\{\pi:V\rightarrow \{0,\dots, k\}\,\middle|\, \sum_{u\in V}\pi(u)=l\right\}.
\end{split}    
\end{equation*}
For any state $\pi \in S_{\mathrm{GEP}}$ and $u,v\in V$ with $u\neq w$, if $\pi(u)>0$ and $\pi(v)<k$, we define $\pi^{uv}$ to be the state with one particle at vertex $u$ of state $\pi$ transitioning to vertex $v$ (cf. Figure\ref{fig: GEP1}), that is 
\begin{equation*}
\pi^{uv}(w)=\left\{\begin{array}{cc}
\pi(u)-1&w=u\\
\pi(v)+1&w=v\\
\pi(w)&\mbox{otherwise,}
\end{array}\right.    
\end{equation*}
For any $u,v\in V$, we assume the transition from a state $\pi \in S_{\mathrm{GEP}}$ to $\pi^{uv} \in S_{\mathrm{GEP}}$ occurs with rate $\mu(\pi,u,v)r(u,v)$, where $\mu:S_{\mathrm{GEP}}\times V\times V \rightarrow \mathbb{R}_{\geq 0}$. 
Then, we define the \textit{generalized exclusion process} as the Markov process with a state space $S_{\mathrm{GEP}}$ and a generator
\begin{equation*}
\mathcal{L}^{\mathrm{GEP}_{k,l}(X)}f(\pi)=\sum_{u,v\in V}\mu(\pi, u,v)r(u,v)(f(\pi^{uv})-f(\pi)),
\end{equation*}
where $f:S_{\mathrm{GEP}}\rightarrow \mathbb{R}$ and $\pi\in S_{\mathrm{GEP}}$.    
In this paper, we consider the case $\mu(\pi,u,v)=\pi(u)(k-\pi(v))$ for $u,v\in V$ and $\pi\in S_{\mathrm{GEP}}$. 
We refer to a generalized exclusion process with such a transition rate as a \textit{normal} generalized exclusion process

\begin{figure}[H]
 \centering
 \includegraphics[keepaspectratio, scale=0.2]
      {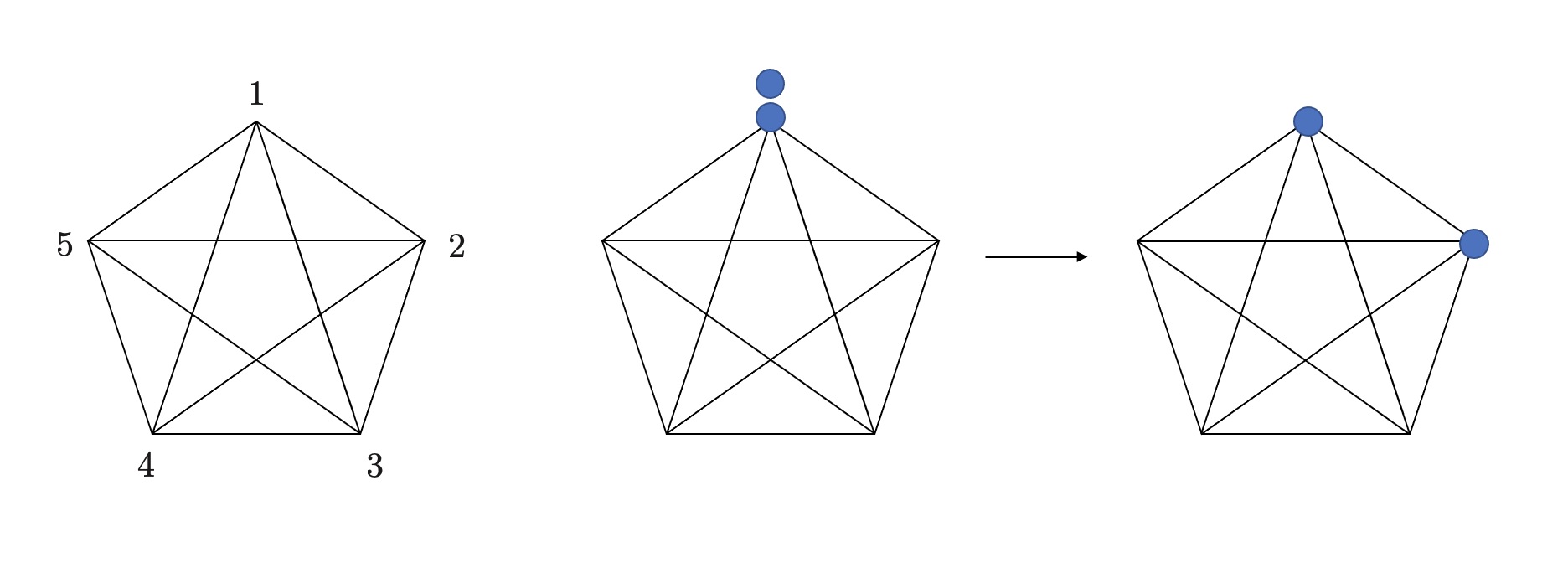}
 \caption{GEP on $V=\{1,2,3,4,5\}$ with $l=2$, $k=2$. This picture shows the underlying graph and the transition of the jump on edge $(1,2)$. }
 \label{fig: GEP1}
\end{figure}

The transition rate of a normal generalized exclusive process means that vertices with more particles are more likely to spit out particles and vertices with more empty spaces are more likely to accept particles.
The normal generalized exclusion process is also an example of a generalized exclusion process that satisfies the gradient condition in a certain sense (c.f \S \ref{subsection-generalized-exclusion-process}). 
The normal generalized exclusion process is a special case, in which there is no "reservoir" as studied in the processes in \cite{FGJS} and \cite{JS}, that is, there is no change in the total particle number.

Caputo \cite{C04} gives a lower bound on the spectral gap of generalized exclusion processes that are not necessarily normal.
In this paper, we consider only normal generalized exclusion processes, but we explicitly describe the spectral gap of a generalized exclusion process by the spectral gap of a random walk. A random walk is a continuous-time Markov process and is defined as follows.

\begin{definition}\label{definitiojn-random-walk-introduction}
A random walk on $V$ with transition rate $r:V\times V\rightarrow \mathbb{R}_{\geq 0}$ is a continuous-time Markov process in which a single particle jumps from $u$ to $v$ at a rate of $r(u,v)$.
In other words, a state space is $S_{RW}=V$ and its generator is given by
\begin{equation*}
\mathcal{L}^\mathrm{RW}f(u)=\sum_{v\in V}r(u,v)(f(v)-f(u)) 
\end{equation*}
where $f:V\rightarrow \mathbb{R}$ and $u\in V$. 
We note that this is the special case of the generalized exclusion process with $k=l=1$.
\end{definition}

The main theorem of this paper is that the spectral gap of a normal random walk is equal to $k$ times the spectral gap of a random walk on a common graph as follows.

\begin{theorem}[{\Cref{theorem-spectral-gap-gep}}]\label{theorem-spectral-gap-gep-in-introduction}
Let $\lambda_1^\mathrm{RW}$ be the spectral gap of a random walk on $X=(V,r)$. 
We have
\begin{equation*}
\lambda_1^{\mathrm{GEP}_{k,l}(X)}=k\lambda_1^\mathrm{RW}.   
\end{equation*}
In particular, the spectral gap of the normal generalized exclusion process is independent of the number of particles $l$.
\end{theorem}

\begin{remark}\label{remark-jara-salvador}
Jara and Salvador study the generalized exclusion process with``reservoirs" and independently prove \Cref{theorem-spectral-gap-gep-in-introduction} for these processes. 
However, our proof is different from theirs, and the purpose of this paper is to introduce a new strategy for studying the spectral gap.
\end{remark}

To prove this theorem, we apply Aldous' spectral gap conjecture.
Aldous' spectral gap conjecture is a theorem that asserts that the spectral gap of an interchange process is equal to that of the corresponding random walk, as follows:

\begin{definition}\label{definitiojn-interchange-process-introduction}
An interchange process on $V$ with rate $r:V\times V\rightarrow \mathbb{R}_{\geq 0}$ is a continuous-time Markov process with state space $S_\mathrm{IP}$ which gives the assignments of $n$ labeled particles to $V$ 
such that each vertex is occupied by exactly one particle. 
For any state $\eta\in S_\mathrm{IP}$, we let $\eta^{uv}$ denote a state in which the labels of vertices $u$ and $v$ of $\eta$ are swapped.
We assume that a transition from $\eta \in S_{\mathrm{IP}}$ to $\eta^{uv}$ occurs with rate $r(u,v)$. 
Then, the generator of the interchange process is defined by 
\begin{equation*}
\mathcal{L}^\mathrm{IP}f(\eta)=\sum_{u,v\in V}r(u,v)(f(\eta^{uv})-f(\eta)).   
\end{equation*}
\end{definition}

The statement of Aldous' spectral gap conjecture is as follows. 

\begin{theorem}[{\cite{CLR10}*{Theorem 1.1}}]\label{proposition-aldous-conjecture-in-introduction}
Let $\lambda_1^\mathrm{IP}$ be the spectral gap of the interchange process on $X=(V,r)$. 
We have
\begin{equation*}
\lambda_1^\mathrm{IP}=\lambda_1^\mathrm{RW}.    
\end{equation*}
\end{theorem}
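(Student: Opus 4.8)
Since the final statement is the theorem of Caputo, Liggett and Richthammer, the plan below is how I would reconstruct their argument. The first move is to pass to the algebraic description of the interchange process. Writing $n=|V|$ and identifying configurations $\eta\in S_{IP}$ with bijections from labels to vertices, the generator becomes an element of the group algebra $\mathbb{C}[S_n]$,
\[
-\mathcal{L}^{IP}=\sum_{\{u,w\}}r(u,w)\,(1-\tau_{uw}),
\]
where $\tau_{uw}$ is the transposition of $u$ and $w$ acting by the regular representation. Because $-\mathcal{L}^{IP}$ commutes with the $S_n$-action that relabels particles, its spectrum decomposes over the irreducible representations $V_\lambda$ indexed by partitions $\lambda\vdash n$, and $\lambda_1^{IP}$ is the smallest nonzero eigenvalue appearing in any $V_\lambda$. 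I would then check that restricting $-\mathcal{L}^{IP}$ to functions depending only on the position of a single fixed label reproduces the random walk generator $-\mathcal{L}^{RW}$ on $\mathbb{C}^V$; since $\mathbb{C}^V$ decomposes as the trivial representation $V_{(n)}$ (eigenvalue $0$) plus the standard representation $V_{(n-1,1)}$, the value $\lambda_1^{RW}$ is realized as an eigenvalue of $-\mathcal{L}^{IP}$ on $V_{(n-1,1)}$. This already yields the easy inequality $\lambda_1^{IP}\le\lambda_1^{RW}$.

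The substance of the theorem is the reverse inequality $\lambda_1^{IP}\ge\lambda_1^{RW}$: the minimal gap over all $V_\lambda$ is attained on the standard representation. I would prove this by induction on $n$, with the engine of the induction being an operator inequality in $\mathbb{C}[S_n]$, the octopus inequality. For a star centered at a vertex $0$ with legs to $1,\dots,d$ carrying weights $r_1,\dots,r_d$, it asserts
\[
\sum_{j=1}^{d}r_j\,(1-\tau_{0j})\ \succeq\ \frac{1}{\sum_{j=1}^d r_j}\sum_{1\le i<j\le d}r_i r_j\,(1-\tau_{ij})
\]
as positive semidefinite operators. Intuitively this says that the Dirichlet energy supplied by the edges incident to a single vertex dominates, after normalization, the energy of the complete graph on its neighbours.

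Granting the octopus inequality, the plan is to use it to redistribute weight away from a chosen vertex, comparing the quadratic form of $-\mathcal{L}^{IP}$ on $V$ with that of a related interchange process on the remaining vertices. The comparison is arranged so that it does not decrease the spectral gap, and then the induction hypothesis on $n-1$ vertices, combined with the fact that the standard-representation eigenvalue is controlled along the reduction, delivers $\lambda_1^{IP}\ge\lambda_1^{RW}$. Care is needed to ensure that this contraction step never removes the eigenvalue coming from $V_{(n-1,1)}$, which is exactly the eigenvalue one wants to be the minimizer at the end.

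The main obstacle is the octopus inequality itself: it is a genuinely nontrivial semidefiniteness statement in the noncommutative algebra $\mathbb{C}[S_n]$ that cannot be verified representation by representation in any transparent way. I would attack it by a second, inner induction on the number of legs $d$. The base case $d=2$ reduces to a direct computation with two transpositions, and the inductive step would isolate the contribution of the $d$-th leg and complete the square, a Schur-complement manoeuvre, to reduce the $d$-leg inequality to the $(d-1)$-leg one. Matching this purely algebraic inequality with the representation-theoretic bookkeeping of the outer induction is the delicate point on which the whole argument turns.
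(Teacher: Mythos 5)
You should first note what you are being compared against: the paper never proves this statement. It is quoted verbatim from \cite{CLR10} (the theorem header itself carries the citation), restated inside the paper's framework as Proposition \ref{proposition-aldous-conjecture}, and then consumed as a black box in the proof of Theorem \ref{theorem-spectral-gap-gep}. So your proposal can only be measured against the argument of Caputo--Liggett--Richthammer, and as an \emph{outline} of that argument it is faithful: identifying $S_{IP}$ with $\mathfrak{S}_n$ and $-\mathcal{L}^{IP}$ with $\sum_{u,w}r(u,w)(1-\tau_{uw})$ in the regular representation; extracting the easy inequality $\lambda_1^{IP}\le\lambda_1^{RW}$ from the single-particle subspace $\mathbb{C}^V=V_{(n)}\oplus V_{(n-1,1)}$ (in the paper's language, the random walk is the quotient $\hat{P}_{IP}/H_{n-1}$, so Proposition \ref{proposition-eigen-vector} gives this direction); and reducing the hard inequality to an induction on $n$ driven by the octopus inequality, which you state correctly.

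The genuine gap is that the octopus inequality --- which is essentially the entire mathematical content of the theorem, everything else being soft --- is left unproven, and the route you propose for it would fail. In the inductive scheme on the number of legs, to pass from $d-1$ legs to $d$ legs one needs the remainder
\[
\sum_{j<d}c_j(1-\tau_{0j})+c_0(1-\tau_{0d})-\sum_{i<d}c_i(1-\tau_{id}),\qquad c_j:=r(0,j),\quad c_0:=\sum_{j\le d}c_j,
\]
to be positive semidefinite, and it is not, already in $\mathbb{C}[\mathfrak{S}_3]$: on the invariant single-particle subspace its quadratic form at $f_0=0$, $f_1=1$, $f_2=\varepsilon$ equals $c_2\varepsilon^2+2c_1\varepsilon$, which is negative for small $\varepsilon<0$; non-positivity on an invariant subspace kills positivity of the operator. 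A ``complete the square''/Schur-complement variant does not rescue this in any known form: squaring against the $d$-th leg produces products $\tau_{0d}\tau_{0j}$, which are $3$-cycles, and these cannot be reabsorbed into the transposition terms of a smaller octopus, so the reduction never closes inside the span of transpositions. The proof in \cite{CLR10} is instead a global and rather delicate computation in the group algebra that treats all legs simultaneously, exploiting identities such as $\tau_{0i}\tau_{0j}=\tau_{ij}\tau_{0i}$ to decompose the octopus operator into manifestly nonnegative pieces; no induction on the number of legs appears, and the base case $d=2$ (the parallel-resistance inequality, which is genuinely easy) gives no leverage for larger $d$. In short, your proposal correctly reproduces the architecture of the CLR argument, but the load-bearing wall is missing, and the bricks you suggest for it do not fit.
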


\begin{remark}
    Our main result \Cref{theorem-spectral-gap-gep-in-introduction} for the special case $k=1$ and $l=n$ follows directly from \Cref{proposition-aldous-conjecture-in-introduction}.
\end{remark}

The theorem was first predicted by Aldous \cite{DA} for unweighted graphs and finally proved for arbitrary symmetric weighted graphs by Caputo, Ligget, and Richthammer \cite{CLR10} in 2009.
Moreover, they also proved, as a consequence of Aldous's spectral gap conjecture, that the spectral gap of the colored exclusion process is equal to the spectral gap of the random walk.
In addition, several generalizations and analogues of Aldous's spectral gap conjecture have been proposed.
For example, Piras \cite{P10} gave a generalization of simple block shufflings.
Kim-Sau \cite{KS23} gave an analogue for simple inclusion processes without symmetry, such as the interchange and the colored exclusion processes.

%%%%%%%%%%%%%%%%%%%%%%%%%%%%%%%%%%%%%%%%%%%%%%%%%%%
%
\subsection{Strategy}\label{subsection-construction-of-this-paper}
%
%%%%%%%%%%%%%%%%%%%%%%%%%%%%%%%%%%%%%%%%%%%%%%%%%%%

One of the most surprising aspects of Aldous' spectral gap conjecture is that although the state space of a random walk is much ``simpler" than that of an interchange process, their spectral gaps are equal.
Therefore, if we can find a suitable continuous-time Markov process $P$ whose state space is more``complex" than that of the random walk, but ``simpler" than that of the interchange process, Aldous' spectral gap conjecture leads to the equality
\begin{equation*}
\lambda_1^\mathrm{RW}=\lambda_1^P=\lambda_1^\mathrm{IP}.    
\end{equation*}
In this paper, to prove this equality, we consider the group action on the interchange process and prove that the generalized exclusion process is the quotient of the interchange process by the group action. 
The key proposition of this paper is \Cref{theorem-spectral-gap-gep-in-introduction}, which gives a partial answer to \Cref{main-question}.

As studied by Cesi \cite{Ces10}, Aldous' spectral gap conjecture was reformulated using Cayley graphs.
Moreover, a generalization and analogue of Aldous' spectral gap conjecture in the context of Cayley graphs has been studied by Parazanchevski and Puder \cite{PP20}, Li, Xia, and Zhou \cite{LXZ23}.
In \S \ref{section-process-and-spectral-gap}, we study the spectral gap of complete weighted directed graphs and their quotients. 
We will restate the results of classical continuous-time Markov processes in our setting.
In \S \ref{section-various-processes}, we describe the generalized exclusion process as the quotient of a Cayley graph associated with an interchange process. 
Finally, we apply the theorem to generalized exclusion processes and prove \Cref{theorem-spectral-gap-gep-in-introduction}.

%%%%%%%%%%%%%%%%%%%%%%%%%%%%%%%%%%%%%%%%%%%%%%%%%%%
%
%
%
%
\section{Spectral gaps of complete weighted directed graphs}\label{section-process-and-spectral-gap}
%
%
%
%%%%%%%%%%%%%%%%%%%%%%%%%%%%%%%%%%%%%%%%%%%%%%%%%%%
In this section, we study the spectral gap of complete weighted directed graphs and their quotients.
These results are similar to classical continuous-time Markov processes (cf. \cite{AF}).

%%%%%%%%%%%%%%%%%%%%%%%%%%%%%%%%%%%%%%%%%%%%%%%%%%%
%
\subsection{Complete weighted directed graph}\label{subsection-process-and-sub-process}
%
%%%%%%%%%%%%%%%%%%%%%%%%%%%%%%%%%%%%%%%%%%%%%%%%%%%

We define a complete weighted directed graph $\mathcal{S}=(S,W)$ to be a pair consisting of a finite set $S$ of vertices and a weight on the edge 
$W:S\times S\rightarrow \mathbb{R}_{\geq 0}$ such that $W(x,x)=0$ for any $x\in S$. 
For simplicity, we assume that $|S|>1$.
We define the Laplacian on $\mathcal{S}$ as follows.
 
\begin{definition}\label{definition-process}
We define the linear operator $\mathcal{L}^\mathcal{S}$ on $\mathcal{S}^*:=\mathrm{Map}(S,\mathbb{R})$ by
\begin{equation*}
\mathcal{L}^\mathcal{S}f(x)=\sum_{y\in S}W(x,y)(f(y)-f(x)),\quad f\in \mathcal{S}^*,    
\end{equation*}
which we call the \textit{Laplacian} of $\mathcal{S}$. 
\end{definition}

In this paper, we will consider the eigenvalues of the Laplacian of $\mathcal{S}$. 
We then define the morphisms of weighted complete graphs, which are important for comparing eigenvalues of the Laplacian of two complete weighted directed graphs.

\begin{definition}\label{definition-sub-process}
Let $\mathcal{S}_i=(S_i,W_i)$, $i=1,2$ be complete weighted directed graphs. 
We define a morphism from $\mathcal{S}_2$ to $\mathcal{S}_1$ to be a map $\varphi:S_2\rightarrow S_1$ such that
\begin{equation*}
\mathcal{L}^{\mathcal{S}_2}(f\circ \varphi) 
=(\mathcal{L}^{\mathcal{S}_1}f)\circ \varphi
\end{equation*}
for any $f\in \mathcal{S}_1^*$. 
A morphism $\varphi$ is an \textit{isomorphism} if $\varphi$ is a bijection. 
\end{definition}

\begin{example}\label{example:morphism1}
    Let $\mathcal{S}_1=(S_1,W_1)$ be a weighted graph on $S_1=\{1,2,3\}$ with the weight 
    \begin{equation*}
        W_1(x,y)=\begin{cases}
            1 & x\neq y\\
            0 & x=y.
        \end{cases}
    \end{equation*}
    We let $\mathcal{S}_2=(S_2,W_2)$ be a weighted graph on $S_2:=\{(x,i)\mid x\in S_1,i=1,2\}$ with the weight
    \begin{equation*}
        W_2((x,i),(y,j))=\begin{cases}
            1 & x\neq y\, \mathrm{and}\, i\neq j\\
            0 & \mathrm{otherwise}.
        \end{cases}
    \end{equation*}
    Then the map $\varphi_{12}:S_2\rightarrow S_1$ such that $\varphi_{12}((x,i))=x$ is an morphism $\varphi_{12}:\mathcal{S}_2\rightarrow \mathcal{S}_1$.
\end{example}

\begin{example}\label{example:morphism2}
    Let $\mathcal{S}_3=(S_3,W_3)$ be a weighted graph on $S_3=\{1,2\}$ with the weight 
    \begin{equation*}
        W_3(x,y)=\begin{cases}
            2 & x\neq y\\
            0 & x=y.
        \end{cases}
    \end{equation*}
    We let $\mathcal{S}_4=(S_4,W_4)$ be a weighted graph on $S_4:=\{(x,i)\mid x\in S_3,i=1,2\}$ with the weight
    \begin{equation*}
        W_4((x,i),(y,j))=\begin{cases}
            1 & x\neq y\\
            1 & x = y.
        \end{cases}
    \end{equation*}
    Then the map $\varphi_{34}:S_4\rightarrow S_3$ such that $\varphi_{34}((x,i))=x$ is an morphism $\varphi_{34}:\mathcal{S}_4\rightarrow \mathcal{S}_3$.
\end{example}

\begin{figure}[H]
 \centering
 \includegraphics[keepaspectratio, scale=0.2]
      {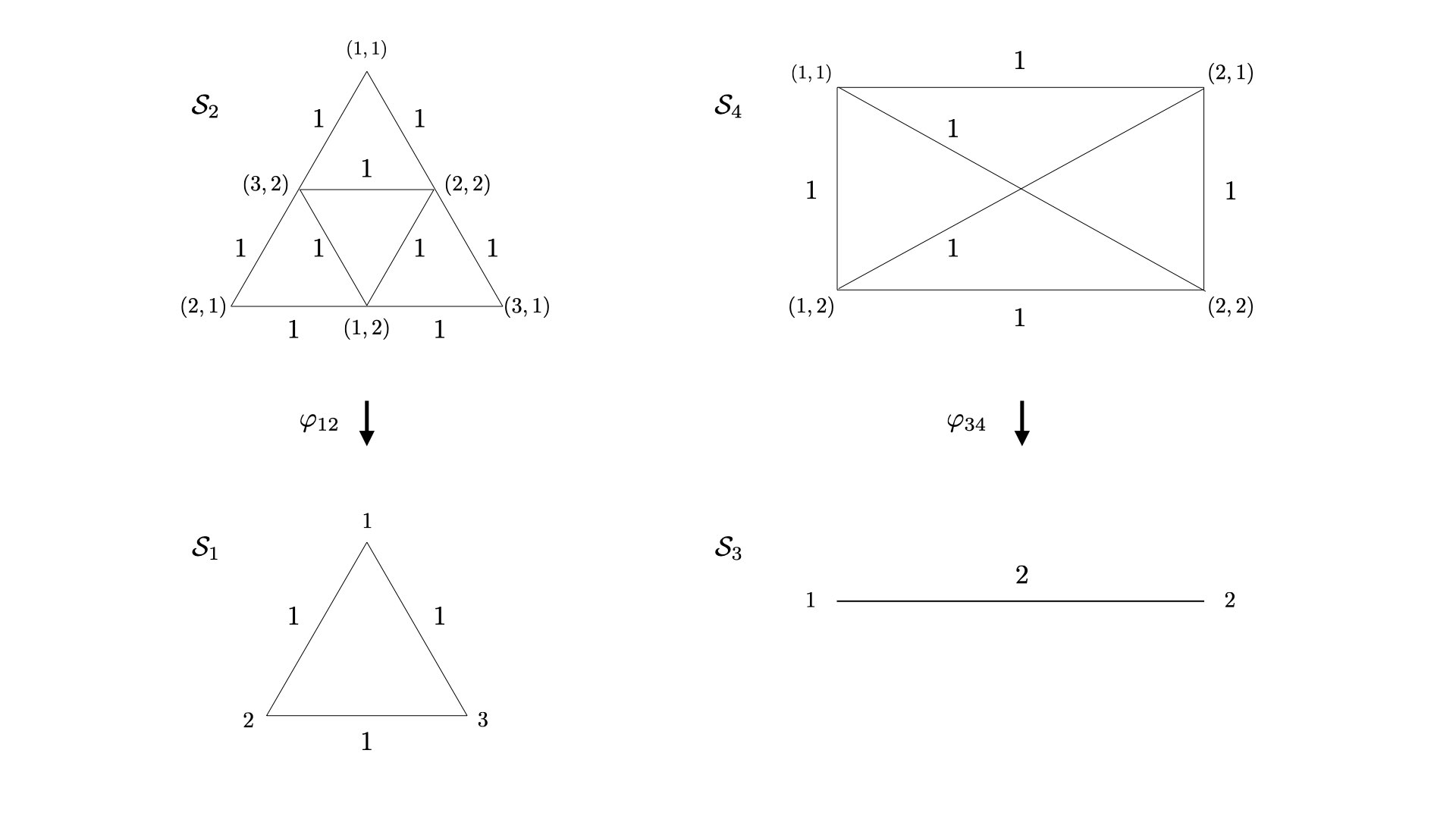}
 \caption{The left column represents the morphism of \Cref{example:morphism1}, and the right column represents the morphism of \Cref{example:morphism2}.}
 \label{fig:quotient}
\end{figure}

We now describe the relationship between continuous-time Markov processes and complete weighted directed graphs. 
Let $\{X_t\}_{\geq 0}$ be a continuous-time Markov process with a finite state set $S$ and a transition rate $q_{x,y}$ for $x,y\in S$.
We define a complete weighted directed graph $\mathcal{S}=(S,W)$ by 
\begin{equation*}
W(x,y)=\left\{\begin{array}{cc}
q_{x,y}&\mbox{if }x\neq y\\ 
0&\mbox{otherwise.}
\end{array}\right.    
\end{equation*}
We call $\mathcal{S}=(S,W)$ constructed in this way a complete weighted directed graph associated with a continuous-time Markov process $\{X_t\}_{t\geq 0}$. 
In the following, we restate the definition and properties of continuous-time Markov processes in terms of complete weighted directed graphs.

\begin{lemma}\label{lemma-sub-process}
Let $\mathcal{S}_i=(S_i,W_i)$, $i=1,2$ be complete weighted directed graphs. 
A map $\varphi:S_2\rightarrow S_1$ is a morphism of complete weighted directed graphs if and only if $\varphi$ satisfies 
\begin{equation*}
W_1(\varphi(x), y)=\sum_{y' \in \varphi^{-1}(y)} W_2(x,y')
\end{equation*}
for any $x\in S_2$, $y\in S_1$ such that $\varphi(x)\neq y$. 
\end{lemma}

\begin{proof}
For sufficiency, it is sufficient to prove the commutativity of $\varphi$ and the Laplacian.
For any $x\in S_2$, $y\in S_1$, we assume 
\begin{equation*}
W_1(\varphi(x), y)=\sum_{y' \in \varphi^{-1}(y_1)} W_2(x,y').
\end{equation*}
For any $f\in \mathcal{S}_1^*$ and $x\in S_2$, we have
\begin{equation*}
\begin{split}
\mathcal{L}^{\mathcal{S}_2}(f\circ\varphi)(x)&=\sum_{y \in S_2}W_2(x,y)(f\circ\varphi(y)-f\circ\varphi(x))\\
&=\sum_{z\in \varphi(S_1)}\sum_{y' \in \varphi^{-1}(z)}W_2(x,y')(f\circ\varphi(y')-f\circ\varphi(x))\\
&=\sum_{z\in S_1}\sum_{y' \in \varphi^{-1}(Z)}W_2(x,y')(f(z)-f\circ\varphi(x))\\
&=\sum_{z\in S_1}W_1(\varphi(x),z)(f(z)-f\circ\varphi(x))\\
&=(\mathcal{L}^{\mathcal{S}_1}f)\circ\varphi(x).
\end{split}
\end{equation*}
Therefore, the map $\varphi:S_2\rightarrow S_1$ is a morphism of complete weighted directed graphs. 

For the necessity, we fix $y\in S_1$ and define $f_y\in \mathcal{S}_1^*$ by
\begin{equation*}
f_y(z)=\left\{
\begin{array}{cc}
1 & z=y\\
0 & \mbox{otherwise}.
\end{array}
\right.
\end{equation*}
For any $x\in S_2$ such that $\varphi(x)\neq y$, we have
\begin{equation*}
    \mathcal{L}^{\mathcal{S}_2}(f_y\circ\varphi)(x)=\sum_{y'\in\varphi^{-1}(y)}W_2(x,y').
\end{equation*}
Moreover, we have 
\begin{equation*}
(\mathcal{L}^{\mathcal{S}_1}f_y)\circ\varphi(x)=\sum_{z\in \mathcal{S}_1}W_1(\varphi(x),z)(f_y(z)-f_y\circ \varphi(x))=W_1(\varphi(x),y).
\end{equation*}
Since $\varphi:S_2\rightarrow S_1$ is a morphism of complete weighted directed graphs, we have 
\begin{equation*}
W_1(\varphi(x),y)=\sum_{y'\in\varphi^{-1}(x)}W_2(x,y'). 
\end{equation*}
Since this holds for any $y\in S_1$, the desired equation is obtained. 
\end{proof}

Next, we study the irreducibility and reversibility of complete weighted directed graphs.
This definition is an analogue of the irreducibility and reversibility in continuous-time Markov processes.
Furthermore, we note that if a continuous-time Markov process is irreducible or reversible, then the associated complete weighted directed graph is also irreducible or reversible.

\begin{definition}\label{definition-irreducible}
Let $\mathcal{S}=(S,W)$ be a complete weighted directed graph.
We say that $\mathcal{S}$ is \textit{irreducible} if for any $x,y\in S$, there exists a finite sequence $x=x_0,\dots,x_k=y$ such that $W(x_0,x_1)\cdots W(x_{n-1},x_n)\neq 0$.
\end{definition}

\begin{lemma}\label{lemma-morphism-irreducible}
Let $\mathcal{S}_i=(S_i,W_i)$, $i=1,2$ be complete weighted directed graphs. 
If $\mathcal{S}_1$ is irreducible, then any morphism $\varphi:\mathcal{S}_2\rightarrow \mathcal{S}_1$ is surjective. 
\end{lemma}

\begin{proof}
By \Cref{lemma-sub-process}, for any $x\in S_2$ such that $\varphi(x)\neq y$,
we have
\begin{equation*}
W_1(\varphi(x), y)=\sum_{y' \in \varphi^{-1}(y)} W_2(x,y').
\end{equation*}
If there exists $y\in S_1$ such that $y\not\in\varphi(S_2)$, 
then $W_1(\varphi(x), y)=0$ for any $y\not\in \varphi(S_2)$. 
This contradicts the assumption that $\mathcal{S}_1$ is irreducible. 
Therefore, $\varphi$ is surjective.  
\end{proof}

\begin{definition}\label{definition-symmetry}
Let $\mathcal{S}=(S,W)$ be a complete weighted directed graph. 
We say that $\mathcal{S}$ is \textit{reversible} if there is $\psi\in \mathrm{Map}(S,\mathbb{R}_{\geq 0})$ such that 
\begin{equation*}
W(y,x)\psi(x)=W(x,y)\psi(y)
\end{equation*}
for any $x,y \in S$. 
\end{definition}

\begin{lemma}\label{lemma-irr-and-rev}
Let $\mathcal{S}_i=(S_i,W_i)$, $i=1,2$ be complete weighted directed graphs. 
We assume that a morphism $\varphi:\mathcal{S}_2\rightarrow \mathcal{S}_1$ is surjective.
\begin{enumerate}
\item If $\mathcal{S}_2$ is irreducible, then $\mathcal{S}_1$ is also irreducible. \label{lemma: irr} 
\item If $\mathcal{S}_2$ is reversible, then $\mathcal{S}_1$ is also reversible. \label{lemma: rev} 
\end{enumerate}
\end{lemma}

\begin{proof}
\eqref{lemma: irr} Since the morphism $\varphi:S_2\rightarrow S_1$ is surjective,  for any $x',y'\in S_1$, there exist $x,y\in S_2$ such that $\varphi(x)=x'$, $\varphi(y)=y'$.  
Since $\mathcal{S}_2$ is irreducible, there exists $x=x_0,x_1,\dots,x_n=y\in S_2$ such that 
\begin{equation*}
W_2(x_0,x_1)\cdots W_2(x_{n-1},x_n)\neq 0. 
\end{equation*}
Therefore, we have 
\begin{equation*}
W_1(\varphi(x_i),\varphi(x_{i+1}))=\left\{
\begin{array}{cc}
\sum_{\varphi(z)=\varphi(x_{i+1})}W_2(x_i,z)\geq W_2(x_i,x_{i+1}) & \mbox{ if } \varphi(x_i)\neq \varphi(x_{i+1})\\
0 &  \mbox{ if } \varphi(x_i)= \varphi(x_{i+1}).
\end{array}\right.
\end{equation*}
We define a sequence of elements $x'=z_0,\dots,z_m=y'$ by removing elements $\varphi(x_{i+1})$ such that $\varphi(x_{i+1})=\varphi(x_{i})$ from the original sequence $\varphi(x_0),\varphi(x_1),\dots, \varphi(x_n)$. 
Then, we have
\begin{equation*}
W_1(z_0,z_1)\cdots W_1(z_{m-1},z_m)\neq 0, 
\end{equation*}
hence $\mathcal{S}_1$ is irreducible. 

\eqref{lemma: rev} Since $\mathcal{S}_2$ is reversible, there exists $\psi_2\in \mathrm{Map}(S_2,\mathbb{R}_{\geq 0})$ which satisfies $W_2(x,y)\psi_2(x)=W_2(y,x)\psi_2(y)$.
We define $\psi_1\in \mathrm{Map}(S_1,\mathbb{R}_{\geq 0})$ as 
\begin{equation*}
\psi_1(x)=\sum_{x'\in\varphi^{-1}(x)}\psi_2(x')  
\end{equation*}
for every $x\in S_1$.
Take $x,y\in S_1$, and fix $x_2\in \varphi^{-1}(x)$ and $y_2\in \varphi^{-1}(y)$. 
\begin{equation*}
\begin{split}
W_1(x,y)\psi_1(x)&=\left(\sum_{y'\in\varphi^{-1}(y)}W_2(x_2,y')\right)\left(\sum_{x'\in\varphi^{-1}(x)}\psi_2(x')\right)\\
&=\sum_{x'\in\varphi^{-1}(x)}\sum_{y'\in\varphi^{-1}(y)}W_2(x',y')\psi_2(x')\\
&=\sum_{x'\in\varphi^{-1}(x)}\sum_{y'\in\varphi^{-1}(y)}W_2(y',x')\psi_2(y')\\
&=\left(\sum_{x'\in\varphi^{-1}(x)}W_2(y_2,x')\right)\left(\sum_{y'\in\varphi^{-1}(y)}\psi_2(y')\right)\\
&=W_1(y,x)\psi_1(y).
\end{split}   
\end{equation*}
Therefore, $\mathcal{S}_1$ is reversible.
\end{proof}

The following lemma is well known in the context of continuous-time Markov processes.

\begin{lemma}\label{lemma:non-negative}
If a complete weighted directed graph $\mathcal{S}$ is reversible with respect to $\psi$, all eigenvalues of $-\mathcal{L}^\mathcal{S}$ are real and non-negative. 
\end{lemma}

\begin{proof}
Since $-\mathcal{L}^\mathcal{S}$ is a linear operator on $\mathcal{S}^*$, let $A\in\mathrm{Mat}(|S|,\mathbb{R})$ denote the representative matrix of $-\mathcal{L}^\mathcal{S}$ with respect to the standard basis of $\mathcal{S}^*$.
Then, it suffices to show that all eigenvalues of $A$ are real and non-negative. 
Fix a bijection $\xi:\{1,\dots, n\}\rightarrow S$ where $n=|S|$. 
We define matrices $B$ and $\sqrt{B}$ by
\begin{equation*}
 B=\left(\begin{array}{ccc}
\psi(\xi(1)) & &\\
&\ddots &\\
&& \psi(\xi(n))
\end{array}\right),\quad
\sqrt{B}=\left(\begin{array}{ccc}
\sqrt{\psi(\xi(1))} & &\\
&\ddots &\\
&& \sqrt{\psi(\xi(n))}
\end{array}\right).
\end{equation*}
We put
\begin{equation*}
\widetilde{A}:=\sqrt{B}^{-1}A\sqrt{B}=\sqrt{B}^{-1}AB\sqrt{B}^{-1}.
\end{equation*}
Since $\mathcal{S}$ is reversible with respect to $\psi$, the matrix $AB$ is symmetric.
Therefore, $\widetilde{A}$ is also symmetric. 
Since eigenvalues of $\widetilde{A}$ and $A$ are equal, we may assume that $A$ is symmetric. 

Since $A$ is symmetric, its eigenvalues are real.
Moreover, the quadratic formula corresponding to $A$ is
\begin{equation*}
\begin{split}
\langle f, -\mathcal{L}^\mathcal{S}f\rangle&=-\sum_{x,y\in S}f(x)W(x,y)(f(y)-f(x))\\
&=\frac{1}{2}\sum_{x,y\in S}W(x,y)(f(y)-f(x))^2\geq 0. 
\end{split}    
\end{equation*}
Therefore, eigenvalues of $A$ are non-negative. 
\end{proof}

If a complete weighted directed graph $\mathcal{S}=(S,W)$ is irreducible and reversible, 
by Perron-Frobenius theorem and \Cref{lemma:non-negative}, 
the eigenvalues $\lambda_0,\lambda_1,\dots, \lambda_n$ of $-\mathcal{L}^\mathcal{S}$ satisfies
\begin{equation*}
0=\lambda_0 <\lambda_1\leq \cdots \leq\lambda_{n-1},
\end{equation*}
where $n=|S|$.
Then we can define the spectral gap as follows. 

\begin{definition}\label{definition-spectral-gap}
Let $\mathcal{S}$ be an irreducible, reversible complete weighted directed graph. 
We define the \textit{spectral gap} $\lambda_1^\mathcal{S}$ of $\mathcal{S}$ to be the second smallest eigenvalue of $-\mathcal{L}^\mathcal{S}$. 
\end{definition}

As is well known, the spectral gap can be characterized as the solution to a certain minimization problem.

\begin{lemma}\label{lemma-spectral-gap}
Let $\mathcal{S}$ be an irreducible complete weighted directed graph. 
We assume that $\mathcal{S}$ is reversible with respect to $\psi:\mathcal{S}\rightarrow \mathbb{R}_{\geq 0}$. 
If we define $\langle f,g\rangle_\psi=\sum_{x\in S}f(x)g(x)\psi(x)$, 
then we have
\begin{equation*}
\lambda_1^\mathcal{S}=\min\left\{-\frac{\langle f,\mathcal{L}^\mathcal{S} f\rangle_\psi}{\langle f, f\rangle_\psi}\,\middle | \,\langle f, 1\rangle_\psi=0\right\}. 
\end{equation*}
\end{lemma}

The main purpose of this paper is to introduce a method for evaluating the spectral gap of a given complete weighted directed graph $\mathcal{S}$ by applying Aldous' spectral gap conjecture (c.f. \Cref{proposition-aldous-conjecture-in-introduction}).
If one Markov process $P_1$ is a sub-process of another Markov process $P_2$, then spectral gaps $\lambda_1^{P_1}$, $\lambda_1^{P_2}$ satisfy $\lambda_1^{P_2}\leq \lambda_1^{P_1}$ (cf. \cite{CLR10}*{\S 1.1}).
This property is expressed in terms of complete weighted directed graphs as follows.

\begin{proposition}\label{proposition-eigen-vector}
Let $\mathcal{S}_i=(S_i,W_i)$, $i=1,2$ be irreducible, reversible complete weighted directed graphs. 
If there exists a morphism $\varphi:\mathcal{S}_2\rightarrow \mathcal{S}_1$, all eigenvalues of $\mathcal{L}^{\mathcal{S}_1}$ are also eigenvalues of $\mathcal{L}^{\mathcal{S}_2}$. 
In particular, we have
\begin{equation*}
\lambda^{\mathcal{S}_2}_1\leq \lambda^{\mathcal{S}_1}_1.
\end{equation*}
\end{proposition}

\begin{proof}
By \Cref{lemma-morphism-irreducible}, $\varphi$ is surjective.
We define $\varphi_M:\mathcal{S}_1^*\rightarrow \mathcal{S}_2^*$ by $f\mapsto f\circ\varphi$.
Then $\varphi_M$ is injective. 
Hence $\mathcal{S}_1^*$ can be seen as the vector subspace of $\mathcal{S}_2^*$. 
For any eigenvector $f\in \mathcal{S}_1^*$ of $\mathcal{L}^{\mathcal{S}_1}$ with eigenvalue $\lambda\in\mathbb{R}$, we have 
\begin{equation*}
\begin{split}
\mathcal{L}^{\mathcal{S}_2}(f\circ \varphi)(x) &= (\mathcal{L}^{\mathcal{S}_1}f)\circ \varphi(x)\\
&= \lambda(f\circ \varphi)(x)
\end{split}
\end{equation*}
for any $x \in \mathcal{S}_2$. 
Therefore, $\lambda$ is also an eigenvalue of $\mathcal{L}^{\mathcal{S}_2}$.
\end{proof}

Let $\mathcal{S}_1$ be an irreducible, reversible complete weighted directed graph and consider its spectral gap $\lambda_1^{\mathcal{S}_1}$. 
We often want to evaluate $\lambda_1^{\mathcal{S}_1}$ by a lower bound to give an upper bound of the relaxation time of a continuous-time Markov process. 
If we find an irreducible, reversible complete weighted directed graph $\mathcal{S}_2$ and a morphism $\varphi:\mathcal{S}_2\rightarrow \mathcal{S}_1$, then by \Cref{proposition-eigen-vector} we have the lower bound
\begin{equation*}
\lambda^{\mathcal{S}_2}_1\leq\lambda^{\mathcal{S}_1}_1.
\end{equation*}
However, this does not mean that the lower bound for $\lambda_1^{\mathcal{S}_1}$ is easy to compute. 
This is because the problem of finding the spectral gap generally is easier with $\mathcal{S}_1$ than with $\mathcal{S}_2$. 
Therefore, to answer \Cref{main-question}, we need to find the morphism $\varphi:\mathcal{S}_2\rightarrow \mathcal{S}_1$ and further prove that these spectral gaps are equal; 
\begin{equation*}
\lambda_1^{\mathcal{S}_2}=\lambda_1^{\mathcal{S}_1}.    
\end{equation*}
In this paper, we apply Aldous’s spectral gap conjecture to overcome the obstacles in the second part of this strategy.
In the next subsection, we will show how to reduce the proof of this equality to Aldous' spectral gap conjecture.

%%%%%%%%%%%%%%%%%%%%%%%%%%%%%%%%%%%%%%%%%%%%%%%%%%%
%
\subsection{Group action and quotient}\label{subsection:quotient}
%
%%%%%%%%%%%%%%%%%%%%%%%%%%%%%%%%%%%%%%%%%%%%%%%%%%%

In this subsection, we consider the group action on a complete weighted directed graph and the quotient with respect to the action.
Let $\mathcal{S}=(S,W)$ be a complete weighted directed graph, and let $G$ be a finite group. 
We let $\mathrm{Aut}(\mathcal{S})$ be the group of automorphisms on $\mathcal{S}$ whose multiplication is given by $f\cdot g=g\circ f$ for any automorphisms $f,g$ of $\mathcal{S}$.
The action of $G$ on $\mathcal{S}$ is a group homomorphism $\gamma:G\rightarrow \mathrm{Aut}(\mathcal{S})$.  
If $G$ acts on $\mathcal{S}$, for any $g\in G$ and $x\in S$, we let $xg$ denote $\gamma(g)(x)\in S$. 
By \Cref{lemma-sub-process}, we have the following lemma. 

\begin{lemma}\label{lemma:group-homomorphic-like}
We assume that $G$ acts on a weighted graph $(S,W)$, that is for any $g\in G$ and $x,y\in S$, we have $W(xg,yg)=W(x,y)$. 
Then, it induces an action on $\mathcal{S}$ if and only if we have $W(xg,y)=W(x,yg^{-1})$ for any $x,y\in S$ and $g\in G$.
\end{lemma}

\begin{proof}
    We define the bijection $\varphi_g:S\mapsto S$ by $\varphi_g(x):=xg$.
    Then, by \Cref{lemma-sub-process}, the bijection $\varphi_g$ induces the automorphism on $\mathcal{S}$ if and only if 
    \begin{equation*}
        W(xg,y)=W(\varphi_g(x),y)=\sum_{y'\in \varphi_g^{-1}(y)}W(x,y')=W(x,yg^{-1}).
    \end{equation*}
    for any $x,y\in S$ and $g\in G$.
\end{proof}

We define the quotient of a complete weighted directed graph by a group action as follows. 

\begin{definition}
Let $G$ acts on $\mathcal{S}$. 
We define the quotient $\mathcal{S}/G=(S/G,W_G)$ to be the complete weighted directed graph with weight
\begin{equation}\label{eq:weight-of-quotient}
W_G(xG,yG)=\sum_{g\in G}W(x,yg)
\end{equation}
for any $xG\neq yG$, where $zG$ is the equivalence class of $z\in S$ in $S/G$. 
\end{definition}

\begin{remark}\label{remark:quotient-condition}
The sum in \eqref{eq:weight-of-quotient} is independent of the choices of a representative of an equivalence class. 
Indeed, for any $g\in G$, we have 
\begin{equation}\label{eq:group-quotient-condition}
\sum_{h\in G}W(xg,yh)=\sum_{h\in G}W(x,yhg^{-1}).
\end{equation}
\end{remark}

Now, we show that a quotient of a weighted graph induces a morphism of complete weighted graphs. 

\begin{lemma}\label{lemma-quotient-process}
Let $G$ act on $\mathcal{S}$, 
and let $H\subset G$ be a subgroup of $G$.
By restricting the action of $G$ to $H$, the subgroup $H$ acts on $\mathcal{S}$.
The natural projection $\varphi:S/H\rightarrow S/G$ is a morphism $\mathcal{S}/H\rightarrow \mathcal{S}/G$.
\end{lemma}

\begin{proof}
For any $x\in S$, let $xG$ and $xH$ denote equivalence classes of $x$ in $S/G$ and $S/H$, respectively.
We let $W_G$ and $W_H$ denote weights of quotients $\mathcal{S}/G$ and $\mathcal{S}/H$. 
For any $x,y \in S$, we have 
\begin{equation*}
\begin{split}
W_G(xG,yG)&=\sum_{g\in G}W(x,yg)\\
&=\sum_{gH \in G/H}\sum_{h\in H}W(x,ygh)\\
&=\sum_{y'H\in (yG)/H}W_H(xH,y'H).
\end{split}
\end{equation*}
Therefore, we have
\begin{equation*}
W_G(\varphi(xH), yG)=\sum_{y'H \in \varphi^{-1}(yG)} W_H(xH,y'H)
\end{equation*}
for any $x,y\in S$ such that $xG\neq yG$. 
This implies by \Cref{lemma-sub-process} that $\varphi$ is a morphism of complete weighted graphs. 
\end{proof}

Let $G$ act on $\mathcal{S}$. 
By \Cref{proposition-eigen-vector} and \Cref{lemma-quotient-process}, we have an inequality
\begin{equation*}
\lambda_1^{\mathcal{S}}\leq \lambda_1^{\mathcal{S}/G}.
\end{equation*}

%%%%%%%%%%%%%%%%%%%%%%%%%%%%%%%%%%%%%%%%%%%%%%%%%%%
%
%
%
%
\section{Interchange Process and the Generalized Exclusion Process}\label{section-various-processes}
%
%
%
%%%%%%%%%%%%%%%%%%%%%%%%%%%%%%%%%%%%%%%%%%%%%%%%%%%

In this section, we will evaluate the spectral gap of the normal generalized exclusion process using the spectral gap of a random walk. 
In the following, we assume that any continuous-time Markov process is irreducible. 

\begin{remark}\label{remark-assumption}
The assumption of irreducibility is minor.
If one wants to consider a process that is not irreducible, one can simply apply our results to each irreducible component.
\end{remark}

Let $V=\{1,\dots,n\}$. 
In this section, we will consider a continuous-time Markov process on a symmetric complete weighted complete directed graph $X=(V,r)$ with a weight $r:V\times V\rightarrow \mathbb{R}_{\geq 0}$ such that $r(u,v)=r(v,u)$ and $r(u,u)=0$ for any $u,v\in V$. 
If one wants to consider a continuous-time Markov process on an incomplete graph, 
set $r(u,v)=0$ for any unnecessary edge $(u,v)\in V\times V$.

%%%%%%%%%%%%%%%%%%%%%%%%%%%%%%%%%%%%%%%%%%%%%%%%%%%
%
\subsection{Interchange Process}\label{subsection-block-shuffle-process}
%
%%%%%%%%%%%%%%%%%%%%%%%%%%%%%%%%%%%%%%%%%%%%%%%%%%%
The interchange process is the basic example of a reversible continuous-time Markov process which is defined as follows.

\begin{definition}[{\cite{CLR10}*{1.2.2}}]\label{definieion-interchange-process-markov}
We define a set $S_\mathrm{IP}$ of states as a set of assignments of $n$ labeled particles to the vertices of $X$ in such a way that each vertex is occupied by exactly one particle,
that is $S_\mathrm{IP}=\mathfrak{S}_n$. 
For any state $\eta \in S_\mathrm{IP}$ and $u,v\in V$, we define $\eta^{uv}:=\eta \circ (uv)$ as a composition in $\mathfrak{S}_n$, that is, 
\begin{equation*}
\eta^{uv}(w)=\left\{\begin{array}{cc}
\eta(u)&w=v\\
\eta(v)&w=u\\
\eta(w)&\mbox{otherwise.}
\end{array}\right.    
\end{equation*}
For any $u,v\in V$, we assume that a transition from a state $\eta \in S_\mathrm{IP}$ to $\eta^{uv} \in S_\mathrm{IP}$ occurs with rate $r(u,v)$. 
Then we define the \textit{interchange process on $X$} to be a continuous-time Markov process with state space $S_\mathrm{IP}$ and generator 
\begin{equation*}
\mathcal{L}^\mathrm{IP}f(\eta)=\sum_{u,v\in V}r(u,v)(f(\eta^{uv})-f(\eta)),
\end{equation*}
where $f:S_\mathrm{IP}\rightarrow \mathbb{R}$ and $\eta\in S_\mathrm{IP}$.
\end{definition}

\begin{figure}[H]
 \centering
 \includegraphics[keepaspectratio, scale=0.35]
      {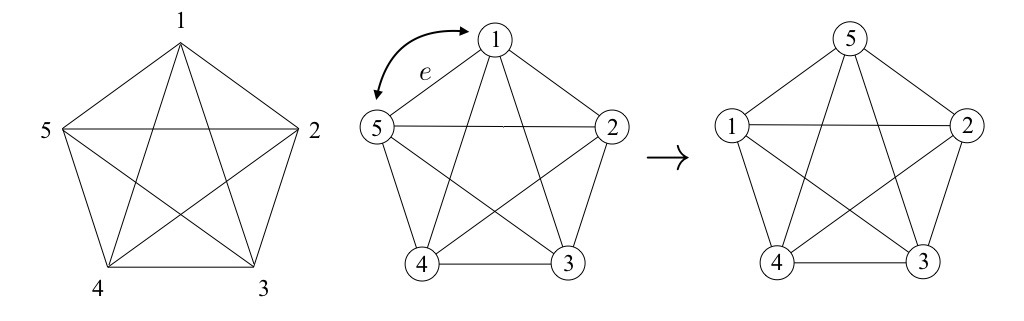}
 \caption{IP on $V=\{1,2,3,4,5\}$. 
 This picture shows the underlying graph and a transition of interchanging particles on vertices $1$ and $5$. }
 \label{fig: IP}
\end{figure}

For the interchange process on $X$, 
a complete weighted directed graph $\mathrm{IP}(X)=(S_\mathrm{IP},W_\mathrm{IP})$ associated to the interchange process is given by 
\begin{equation*}
W_\mathrm{IP}(\eta,\eta')=\left\{\begin{array}{cc}
r(u,v)&\mbox{if }\exists u,v\in V\mbox{ s.t. }\eta'=\eta^{uv}\\
0&\mbox{otherwise.}
\end{array}\right.
\end{equation*}
We note that the symmetric group $\mathfrak{S}_n$ naturally acts on $S_\mathrm{IP}$ from the right by the composition $\eta\cdot g:=g^{-1}\eta$ in $\mathfrak{S}_n$.

\begin{lemma}\label{lemma-group-process-ip}
The natural action of $\mathfrak{S}_n$ on $S_\mathrm{IP}$ induces an action of $\mathfrak{S}_n$ on $\mathrm{IP}(X)$.
\end{lemma}

\begin{proof}
By \Cref{lemma:group-homomorphic-like}, it suffices to show that $W(\eta\cdot g,\eta')=W(\eta,\eta'\cdot g^{-1})$ for any $\eta,\eta'\in S_\mathrm{IP}$ and $g\in \mathfrak{S}_n$. 
For any $h\in \mathfrak{S}_n$ such that $\eta'=(\eta\cdot g)h$, we have $\eta'\cdot g^{-1}=\eta h$. 
Therefore, for any $u,v\in V$, $\eta'=(\eta\cdot g)^{uv}$ if and only if $\eta'\cdot g^{-1}=\eta^{uv}$. 
This implies the desired equation. 
\end{proof}

%%%%%%%%%%%%%%%%%%%%%%%%%%%%%%%%%%%%%%%%%%%%%%%%%%%
%
\subsection{Generalized exclusion process}\label{subsection-generalized-exclusion-process}
%
%%%%%%%%%%%%%%%%%%%%%%%%%%%%%%%%%%%%%%%%%%%%%%%%%%%

In this subsection, we define the generalized exclusion process and prove that a weighted complete directed graph associated with a generalized exclusion process is isomorphic to a quotient of an interchange process on a certain graph. 
The generalized exclusion process is a continuous-time Markov process, which is defined as follows. 

\begin{definition}\label{definition-gep-markov}
Fix $k\in \mathbb{Z}_{\geq 1}$, which we call the \textit{maximal occupancy at $v$}. 
We fix $l\in \mathbb{Z}_{\geq 1}$, which we call the \textit{number of particles}, such that $l<N$.   
We define a set $S_{\mathrm{GEP}}=S_{\mathrm{GEP}}(X,l,k)$ as a set of assignments of $l$ 
indistinguishable particles to $n$ vertices such that each vertex has at most $k$ particles, 
that is 
\begin{equation*}
\begin{split}
S_{\mathrm{GEP}}:=\left\{\pi:V\rightarrow \{0,1,\dots, k\}\,\middle|\, \sum_{u\in V}\pi(u)=l\right\}.
\end{split}  
\end{equation*}
For any state $\pi \in S_{\mathrm{GEP}}$ and $u,v\in V$, if $\pi(u)>0$ and $\pi(v)<k$, 
we define $\pi^{uv}$ to be the state with one particle at vertex $u$ of state $\pi$ transitioning to vertex $v$ (cf. Figure \ref{fig: GEP2}), that is 
\begin{equation*}
\pi^{uv}(w)=\left\{\begin{array}{cc}
\pi(u)-1&w=u\\
\pi(v)+1&w=v\\
\pi(w)&\mbox{otherwise.}
\end{array}\right.    
\end{equation*}
For any $u,v\in V$, we assume a transition from a state $\pi \in S_{\mathrm{GEP}}$ to $\pi^{uv} \in S_{\mathrm{GEP}}$ occurs with rate $\mu(\pi,u,v)r(u,v)$ where $\mu:S_{\mathrm{GEP}}\times V\times V \rightarrow \mathbb{R}_{\geq 0}$. 
Then, we define the \textit{generalized exclusion process} as a continuous-time Markov process with state space $S_{\mathrm{GEP}}$ and generator 
\begin{equation*}
\mathcal{L}^{\mathrm{GEP}_{k,l}(X)}f(\pi)=\sum_{u,v\in V}\mu(\pi, u,v)r(u,v)(f(\pi^{uv})-f(\pi)),
\end{equation*}
where $f:S_{\mathrm{GEP}}\rightarrow \mathbb{R}$ and $\pi\in S_{\mathrm{GEP}}$.    
In this paper, we consider the spectral gap of the generalized exclusion process with the special transition rate $\mu$ such that $\mu(\pi,u,v)=\pi(u)(k-\pi(v))$ for $u,v\in V$, $\pi\in S$. 
We refer to a generalized exclusion process with such a transition rate as a \textit{normal} generalized exclusion process.
We call a normal generalized exclusion process as a \textit{symmetric exclusion process} if $k=1$. 
We call a symmetric exclusion process a \textit{random walk} if $l=1$.
\end{definition}

\begin{figure}[H]
 \centering
 \includegraphics[keepaspectratio, scale=0.2]
      {fig10613.jpeg}
 \caption{GEP on $V=\{1,2,3,4,5\}$ with $l=2$, $k=2$. This picture shows an underlying graph and a transition of the jump on edge $(1,2)$. }
 \label{fig: GEP2}
\end{figure}

\begin{remark}\label{remark-gradient-condition}
The normal generalized exclusion process satisfies a gradient condition in the following sense (cf. \cite{KL}*{Remark 2.4}). 
Let $V=\{u,v\}$, and a weight function $r:V\times V\rightarrow \mathbb{R}_{\geq 0}$ satisfy $r(u,v)=r(v,u)=1$. 
We define $f_w\in \mathrm{Map}(S_{\mathrm{GEP}},\mathbb{R})$, $w\in \{u,v\}$ to be $f_w(\pi)=\pi(w)$. 
Then $f_w$, $w\in \{u,v\}$ is the conserved quantity of the generalized exclusion process. 
Moreover, we have
\begin{equation*}
\begin{split}
-\mathcal{L}^{\mathrm{GEP}_{k,l}(X)}f_u(\pi)&=-\sum_{w_1,w_2\in V}\mu(\pi,w_1,w_2)r(w_1,w_2)(\pi^{w_1w_2}(u)-\pi(u))\\
&=-\mu(\pi,u,v)r(u,v)(\pi^{uv}(u)-\pi(u))-\mu(\pi,v,u)r(v,u)(\pi^{vu}(u)-\pi(u))\\
&=-\pi(u)(k-\pi(v))((\pi(u)-1)-\pi(u))-\pi(v)(k-\pi(u))((\pi(u)+1)-\pi(u))\\
&=k(\pi(u)-\pi(v))
\end{split}    
\end{equation*}
Thus, it can be said that $\mathrm{GEP}_{k,l}(X)$ satisfies the gradient condition. 
\end{remark}

The main result of this paper is the proof of an analogue of Aldous' spectral gap conjecture for the normal generalized exclusion process. 
More precisely, we explicitly presents the spectral gap of the normal generalized exclusion process by $k$-times of that of a random walk on a complete graph of the same order.
To prove the main theorem, we will apply our framework and Aldous' spectral gap conjecture. 
First, we will describe the normal generalized exclusion process as the quotient of the interchange process on an extended graph.  

Let $\widetilde{V}$ be the set $V\times\{1,\dots,k\}$, and let $\varphi_V:\widetilde{V}\rightarrow V$ be the natural projection. 
We define a map $\widetilde{r}:\widetilde{V}\rightarrow \mathbb{R}_{\geq 0}$ by
\begin{equation*}
\begin{split}
\widetilde{r}(\widetilde{u},\widetilde{v})&=\left\{
\begin{aligned}
r(\varphi_V(\widetilde{u}),\varphi_V(\widetilde{v})) && \mbox{if }\varphi_V(\widetilde{u})\neq \varphi_V(\widetilde{v}),\\
\sum_{w\in V-\varphi_V(\widetilde{u})}r(\varphi_V(\widetilde{u}),w) && \mbox{if }\varphi_V(\widetilde{u})=\varphi_V(\widetilde{v}).
\end{aligned}
\right.
\end{split}   
\end{equation*}
The complete weighted directed graph $\widetilde{X}=(\widetilde{V},\widetilde{r})$ is symmetric. 
 
Let $\mathrm{IP}(\widetilde{X})=(\widetilde{S}_\mathrm{IP},\widetilde{W}_\mathrm{IP})$ be the complete weighted directed graph associated with the interchange process on $\widetilde{X}$. 
In the following, we will prove that the normal generalized exclusion process on $(X,r)$ is isomorphic to a quotient of $\mathrm{IP}(\widetilde{X})$. 

For any $l\in\{1,\dots n\}$, let $H_l$ be the subgroup of $\mathfrak{S}_n$ corresponding to the permutation group of $\{1,\dots, l\}$, and let $H_{l,n}$ be the subgroup of $\mathfrak{S}_n$ corresponding to the permutation group of $\{l+1,\dots, n\}$. 
Then, by \Cref{lemma-group-process-ip}, $H_l$ and $H_{l,n}$ naturally act on $\mathrm{IP}(X)$.
We can regard $S_\mathrm{IP}/H_l$ as the set of assignments of $n-l$ labeled particles to the vertices of $X$ in such a way that each vertex is occupied by exactly one particle. 
We can regard $S_\mathrm{IP}/H_lH_{l,n}$ as the set of assignments of $l$ indistinguishable particles to the vertices of $X$ in such a way that each vertex is occupied by exactly one particle. 
Therefore, the complete weighted directed graph $\mathrm{IP}(X)/H_lH_{l,n}$ 
is isomorphic to the symmetric exclusion process on $X$. 
We set $\mathrm{SEP}_l(X)=(S_{\mathrm{SEP}},W_{\mathrm{SEP}}):=\mathrm{IP}(X)/H_lH_{l,n}$. 
If $l=1$, we will especially write $\mathrm{SEP}_l(X)$ as $\mathrm{RW}(X)$. 
We naturally identify $S_{\mathrm{SEP},l}$ with the set
\begin{equation*}
\left\{\eta:V\rightarrow \{0,1\}\,\middle|\, \sum_{v\in V}\eta(v)=l\right\}.
\end{equation*}

Let $N=kl$, and let $l\in \{1,\dots,N-1\}$.
Set $\mathfrak{H}_l:=H_lH_{l,N}$. 
We let $\mathrm{SEP}_l(\widetilde{X})=(\widetilde{S}_{\mathrm{SEP}},\widetilde{W}_{\mathrm{SEP}}):=\mathrm{IP}(\tilde{X})/\mathfrak{H}_l$. 
For any $v\in V$, put $H_v\cong \mathfrak{S}_{k}$.
We next define an action of $\prod_{v\in V}H_v$ on $\mathrm{SEP}_l(\widetilde{X})$. 
We define the action of $H_v$ on $\widetilde{V}$ by 
\begin{equation*}
(u,i)\ast g_v=\left\{\begin{array}{cc}
(u,g_v^{-1}(i))& u=v\\
(u,i)&u\neq v
\end{array}\right.
\end{equation*}
for any $(u,i)\in \widetilde{V}$ and any $g_v\in H_v$. 
We let $\mathbb{H}$ denote the product of groups $\prod_{v\in V}H_v$.
The action of $\mathbb{H}$ on $\widetilde{V}$ induces its action on $\widetilde{S}_{\mathrm{SEP}}$ by $(\eta\ast g_v)(\widetilde{u})=\eta(\widetilde{u}\ast g_v)$ for any $\eta\in \widetilde{S}_{\mathrm{SEP}}$ and $\widetilde{u}\in \widetilde{V}$. 
We define a bijection
\begin{equation*}
\varphi_{\mathrm{GEP}_{k,l}(X)}:\widetilde{S}_{\mathrm{SEP}}/\mathbb{H}\rightarrow S_\mathrm{GEP}
\end{equation*}
by $\varphi_{\mathrm{GEP}_{k,l}(X)}(\eta)(u)=\sum_{\widetilde{u}\in \varphi_V^{-1}(u)}\eta(\widetilde{u})$.
The image of this morphism is shown in Figure \ref{fig: extended-graph}.

\begin{figure}[h]
\centering
 \includegraphics[keepaspectratio, scale=0.2]
      {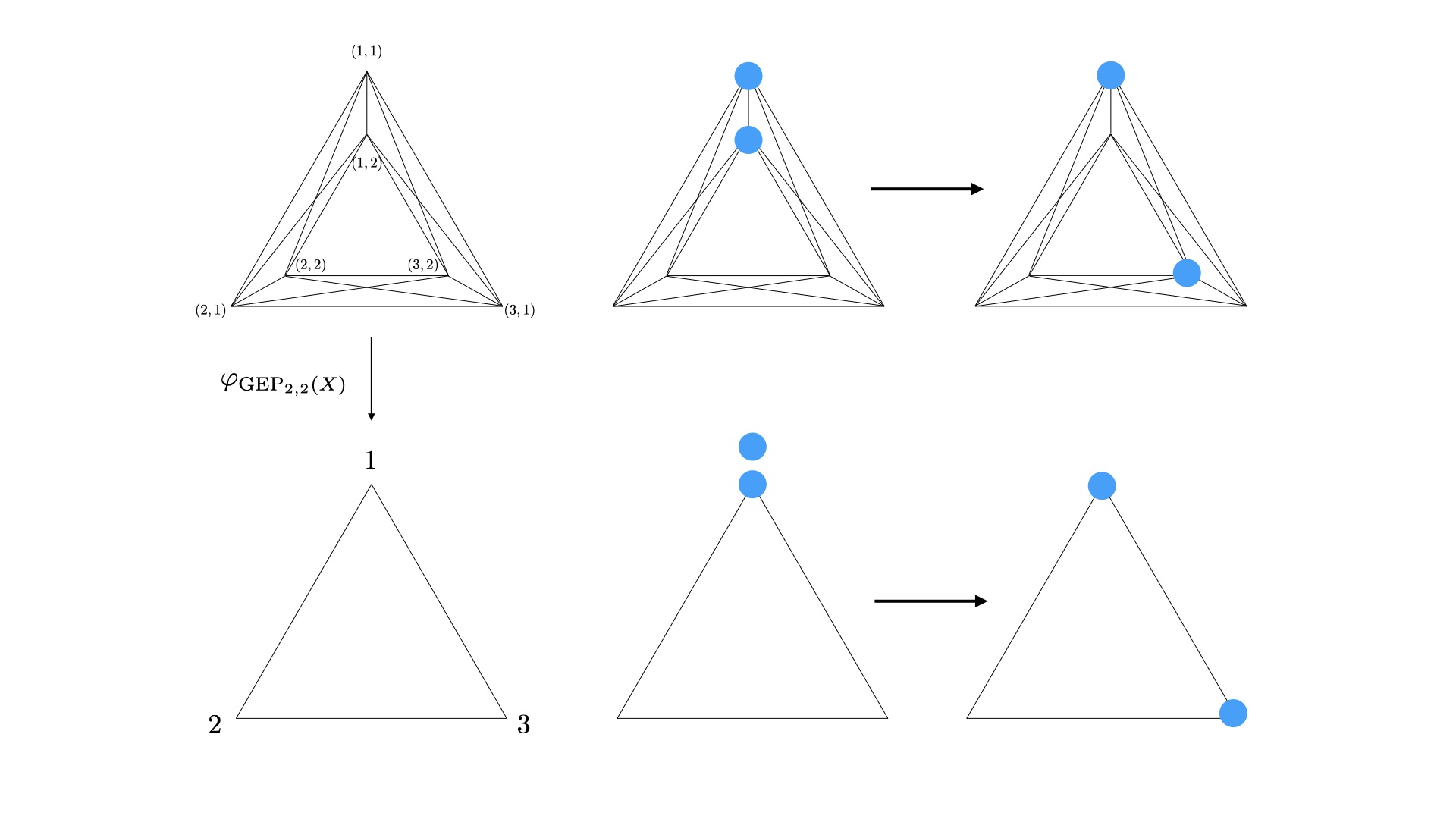}
 \caption{The lower part represents the particle jumps in GEP with $k=2$ and $l=2$, while the upper part illustrates an example of corresponding state transitions in SEP on the expanded graph.}
 \label{fig: extended-graph}
\end{figure}

\begin{proposition}\label{proposition:quotient-gep}
The map $\varphi_{\mathrm{GEP}_{k,l}(X)}$ is a morphism of complete weighted directed graphs. 
In other words, the generalized exclusion process $\mathrm{GEP}_{k,l}(X)$ is a quotient of the interchange process $\mathrm{IP}(\widetilde{X})$.
\end{proposition}

\begin{proof}
Let $\pi_1,\pi_2\in S_\mathrm{GEP}$.
We assume $u,v\in V$ such that $\pi_1^{uv}=\pi_2$. 
Fix $\eta_1\in \widetilde{S}_{\mathrm{SEP}}$ such that $\varphi_\mathrm{GEP}(\eta_1)=\pi_1$. 
Then, for any $\eta_2\in \varphi_\mathrm{GEP}^{-1}(\pi_2)$, 
there are $u_2,v_2\in \tilde{V}$ such that $\eta_1^{u_2v_2}=\eta_2$ if and only if there is $i_u,i_v\in \{1,\dots,k\}$ such that $\eta_1(u,i_u)=1$, $\eta_1(v,i_v)=0$, and 
\begin{equation*}
\eta_2(w,j)=\left\{\begin{array}{cc}
0& w=u,\,j=i_u\\
1& w=v,\,j=i_v\\
\eta_1(w,j)&\mathrm{otherwise}.
\end{array}\right.
\end{equation*}
Therefore, there are exactly $\pi_1(u)(k-\pi_1(v))$ elements $\eta_2\in \varphi^{-1}(\pi_2)$ 
such that there is a pair $\widetilde{u},\widetilde{v}\in \widetilde{V}$ which satisfies $\eta_1^{\widetilde{u}\widetilde{v}}=\eta_2$. 
This implies that
\begin{equation*}
\sum_{\eta_2\in\varphi_{\mathrm{GEP}_{k,l}(X)}^{-1}(\pi_2)}\widetilde{W}_{\mathrm{SEP}}(\eta_1,\eta_2)=\pi_1(u)(k-\pi_1(v))r(u,v)=W_\mathrm{GEP}(\pi_1,\pi_2).
\end{equation*}
Therefore, the map $\varphi_\mathrm{GEP}$ is a morphism of complete weighted directed graphs. 
\end{proof}

%%%%%%%%%%%%%%%%%%%%%%%%%%%%%%%%%%%%%%%%%%%%%%%%%%%
%
\section{Main theorem}\label{subsection-spectral-gap}
%
%%%%%%%%%%%%%%%%%%%%%%%%%%%%%%%%%%%%%%%%%%%%%%%%%%%

In this section, we will prove our main theorem. 
To prove our main theorem, we introduce some important theorems about the spectral gap. 
First, we recall Aldous' spectral gap conjecture. 
This conjecture can be reformulated in our framework as follows. 

\begin{theorem}[{\cite{CLR10}*{Theorem 1.1,\S 4.1}}]\label{prop:aldous-conj}
For each $l\in\{1,\dots,n\}$, we have the equality
\begin{equation*}
    \lambda_1^{\mathrm{IP}(X)}=\lambda_1^{\mathrm{SEP}_l(X)}.
\end{equation*}
\end{theorem}

Let $X_k:=(V,kr)$. 
Another important result is the equality computed by Piras \cite{P10} between the spectral gap of a random walk on $X$ and the spectral gap of a random walk on $X_k$.

\begin{theorem}[{\cite{P10}*{Proposition 3.1}}]\label{prop:piras}
We have the equality
\begin{equation*}
\lambda_1^{\mathrm{RW}(\widetilde{X})}=\lambda_1^{\mathrm{RW}(X_k)}=k\lambda_1^{\mathrm{RW}(X)}.
\end{equation*}
\end{theorem}

\begin{proof}
By \cite{P10}*{Proposition 3.1}, we have 
\begin{equation*}
\lambda_1^{\mathrm{RW}(\widetilde{X})}=\lambda_1^{\mathrm{RW}(X_k)}.
\end{equation*}
Moreover, for any $f\in \mathrm{Map}(V,\mathbb{R})$, we have
\begin{equation*}
\begin{split}
-\mathcal{L}^{\mathrm{RW}(X_k)}f(u)&=-\sum_{v\in V}kr(u,v)(f(v)-f(u))\\
&=-k\sum_{v\in V}r(u,v)(f(v)-f(u))\\
&=-k\mathcal{L}^{\mathrm{RW}(X)}f(u)
\end{split}
\end{equation*}
Therefore, we have $\lambda_1^{\mathrm{RW}(X_k)}=k\lambda_1^{\mathrm{RW}(X_k)}$.
\end{proof}

\begin{corollary}\label{cor:aldos-piras}
   We have the equality
\begin{equation*}
\lambda_1^{\mathrm{SEP}_l(\widetilde{X})}=\lambda_1^{\mathrm{RW}(\widetilde{X})}=k\lambda_1^{\mathrm{RW}(X)}.
\end{equation*}
\end{corollary}

\begin{proof}
    The first equality follows from \Cref{prop:aldous-conj}, and the second equality follows from \Cref{prop:piras}.
\end{proof}

We conclude our main theorem. 

\begin{theorem}\label{theorem-spectral-gap-gep}
We have the equality
\begin{equation*}
    \lambda_1^{\mathrm{GEP}_{k,l}(X)}=\lambda_1^{\mathrm{RW}(X_k)}.
\end{equation*}
In other words, the spectral gap of a generalized exclusion process can be computed by computing the spectral gap of a random walk, which has a simpler state space than the generalized exclusion process.
In particular, the spectral gap of the normal generalized exclusion process is independent of the number of particles $l$.
\end{theorem}

\begin{proof} 
For any $i\in \{1,\dots,N\}$, we $H^i$ let denote the permutation group of $\{1,\dots,i-1,i+1,\dots,N\}$.
For any $h\in H=H_lH_{l,N}$, there exists $i=l+1,\dots,N$ such that $hH_{N-1}h^{-1}=H^i$. 
Therefore, we have $\cap_{i=1,\dots,N}H^i=H_l\subset H_lH_{l,N}$.
Moreover, the intersection $\cap_{i=1,\dots,N-1}H^i$ is the permutation group of $\{1,\dots,l,N\}$. 

By \Cref{cor:aldos-piras}, it suffices to show that 
\begin{equation*}
    \lambda_1^{\mathrm{SEP}_l(\widetilde{X})}=\lambda_1^{\mathrm{GEP}_{k,l}(X)}.
\end{equation*}
Since there is a canonical morphism $\mathrm{SEP}_l(\widetilde{X})\rightarrow \mathrm{GEP}_{k,l}(X)$ and \Cref{lemma-quotient-process}, it is enough to show that 
\begin{equation*}
    \lambda_1^{\mathrm{GEP}_{k,l}(X)}\leq\lambda_1^{\mathrm{SEP}_l(\widetilde{X})}.
\end{equation*}
By \Cref{cor:aldos-piras}, there is an eigenvector $f$ of $-\mathcal{L}^{\mathrm{RW}(X_k)}$ with the eigenvalue $\lambda_1^{\mathrm{SEP}_l(\widetilde{X})}$.
We regard $f$ as the function $V\cong S_{\mathrm{RW}(X_k)}\rightarrow \mathbb{R}$.
Let $\tilde{f}:\widetilde{V}\rightarrow \mathbb{R}$ be a map induced by pulling back $f$ along the canonical surjection $\tilde{V}\rightarrow V$.
Then, by the proof of \cite{P10}*{Proposition 3.1}, $\tilde{f}$ is an eigenvector of $-\mathcal{L}^{\mathrm{RW}(\widetilde{X})}$ with the eigenvalue $\lambda_1^{\mathrm{RW}(\widetilde{X})}$.
We let define the function $F:S^{\mathrm{SEP}_l(\widetilde{X})}\rightarrow \mathbb{R}$ by 
\begin{equation*}
    F(\eta):=\sum_{s\in S_\eta}\tilde{f}(s)
\end{equation*}
where $S_\eta:=\eta^{-1}(1)$ for any $\eta\in S^{\mathrm{SEP}_l(\widetilde{X})}$.
Then, by \cite{CLR10}*{\S 4.1}, $F$ is an eigenvector with the eigenvalue $\lambda_1^{\mathrm{SEP}_l(\widetilde{X})}$. 
Moreover, if we define $\overline{F}:S^{\mathrm{GEP}_{k,l}(X)}\rightarrow\mathbb{R}$ by 
\begin{equation*}
    \overline{F}(\pi):=F(\eta_\pi),
\end{equation*}
where $\eta_\pi\in S^{\mathrm{SEP}_l(\widetilde{X})}$ is an arbitrary lift of $\pi\in S^{\mathrm{GEP}_{k,l}(X)}$ along the canonical surjection $S^{\mathrm{SEP}_l(\widetilde{X})}\rightarrow S^{\mathrm{GEP}_{k,l}(X)}$.
Since $\tilde{f}$ is constant on each fiber $\widetilde{V}\rightarrow V$, this does not depend on the choice of lifts of $\pi$.
Moreover, this is an eigenvector of $-\mathcal{L}^{\mathrm{GEP}_{k,l}(X)}$ with the eigenvalue $\lambda_1^{\mathrm{SEP}_l(\widetilde{X})}$.
Indeed, we have 
\begin{equation*}
    \begin{split}
        -\mathcal{L}^{\mathrm{GEP}_{k,l}(X)}\overline{F}(\pi)&=-\mathcal{L}^{\mathrm{GEP}_{k,l}(X)}(F\circ \varphi_{\mathrm{GEP}_{k,l}(X)})(\eta_\pi)\\
        &=-\mathcal{L}^{\mathrm{SEP}_l(\widetilde{X})}F(\eta_\pi)\\
        &=\lambda_1^{\mathrm{SEP}_l(\widetilde{X})}F(\eta_\pi)\\
        &=\lambda_1^{\mathrm{SEP}_l(\widetilde{X})}\overline{F}(\pi).
    \end{split}
\end{equation*}
Therefore, we conclude that 
\begin{equation*}
    \lambda_1^{\mathrm{GEP}_{k,l}(X)}\leq\lambda_1^{\mathrm{SEP}_l(\widetilde{X})}.
\end{equation*}
This discussion can be summarized in the following diagram.
In the diagram, arrows are drawn from the smaller value of the spectral gap to the larger.
\begin{equation*}
\xymatrix{
\mathrm{IP}(\widetilde{X})\ar[rd]^{\mathrm{quotient}}\ar[dd]_{\mathrm{\Cref{prop:aldous-conj}}}&\\
&\mathrm{SEP}_l(\widetilde{X})\ar[dd]^{=}\ar[ld]^{\mathrm{quotient}}\\
\mathrm{RW}(\widetilde{X})\ar[uu]\ar[dd]_{\mathrm{\Cref{prop:piras}}}&\\
&\mathrm{GEP}_{k,l}(X)\ar[uu]\\
\mathrm{RW}(X_k)\ar[uu]&
}   
\end{equation*}

\end{proof}

\section*{Acknowledgements}
The authors express gratitude to Kenichi Bannai and Makiko Sasada for suggesting the topic and discussion. 
We thank Ryokichi Tanaka, who gave us important references. 
The first author was supported by JST CREST Grant Number JPMJCR1913 including the AIP challenge program. 
The Second author is supported by RIKEN Junior Research Associate Program. 
Finally, the authors would like to thank the referee for carefully reading this paper in great detail and provided very important structural advice which greatly improved the overall presentation of the paper.

\begin{bibdiv}
	\begin{biblist}
		\bibselect{from-IP-to-GEP}
	\end{biblist}
\end{bibdiv}

\end{document}